\documentclass[11pt]{amsart}
\usepackage{txfonts,color}
\usepackage{amssymb}
\theoremstyle{plain}

\renewcommand{\epsilon}{\varepsilon}

\newcommand{\bef}{\begin{flushright}}
\newcommand{\eef}{\end{flushright}}

\newcommand{\eval}[2][\right]{\relax
\ifx#1\right\relax \left.\fi#2#1\rvert}

\def\beq{\begin{eqnarray*}}
\def\eeq{\end{eqnarray*}}

\newtheorem{thm}{Theorem}

\newcommand{\bj}{\bar{j}}
\newcommand{\bk}{\bar{k}}
\newcommand{\bl}{\bar{l}}
\newcommand{\bm}{\bar{m}}

\newcommand{\bq}{\bar{q}}

\newcommand{\bz}{\bar{z}}

\newcommand{\ol}{\overline}

\begin{document}

\title[global Chern-Calabi flow]
{On global Chern-Calabi flows on compact complex surfaces}

\author{ Li Sheng, Li Ma}

  \address{Li Sheng, Department of Mathematics\\
Sichuan University \\
Chengdu  610065, Sichuan \\
China} \email{lsheng@scu.edu.cn}

\address{Li Ma, School of Mathematics and Physics \\
University of Science and Technology Beijing\\
Xueyuan Road 30, Haidian\\
Beijing 100083 \\
China}
\email{lma17@ustb.edu.cn}

\begin{abstract}
In this paper, we propose the Chern-Calabi Flow for extremal metrics in conformal Hermitian geometry. After the existence of extremal metric, we consider the existence of global flows to the Chern-Calabi flows for extremal metrics on compact complex surfaces with Gauduchon metrics.
To achieve this goal, we use the Adams inequality and the Gronwall inequality method to obtain fourth order derivative norm estimates.

 We also study the convergence of the global solution to the Chern-Calabi flow and its limit is the solution of the Chern-Calabi extremal metric problem. We propose a fourth order elliptic problem and derive the Bochner formula relevant to the elliptic estimates for the Chern-Calabi heat flow.

{ \textbf{Mathematics Subject Classification 2020}: 53A30, 58J05, 58J32, 32W20, 35J25,
 53C55}

{ \textbf{Keywords}: Chern scalar curvature, conformal metrics, Chern-Calabi extremal metric flows, global flows, convergence.}
\end{abstract}

\maketitle

\section{Introduction}\label{sect1}

In the seminal paper \cite{Calabi-1}, E.Calabi introduced the extremal metric and the Calabi flow in a Kaehler class on a Kaehler manifold. The constant scalar curvature Kaehler metrics in Kaehler manifolds are extremal metrics and the properties of the extremal metrics are interesting topics in differential geometry. Since there are many interesting results in this direction, we just mention the paper \cite{Li-18}
 and refer to the book \cite{Sz-14} for more results and refrences. X.X.Chen (see section 3 in \cite{Chen-1}) found that the extremal metric on a Riemann surface is the critical metric in the conformal class with unit volume. Later, M.Struwe studied the heat flow approach to the Calabi functional on Riemann surfaces. It is natural question to study the the extremal of the Calabi functional on a conformal class of the Kaehler metric on a Kaehler manifold. In \cite{Chang}, the author studied the Calabi flow in the conformal class of the Riemannian metric on the four dimensional Einstein manifold.
In this paper, we consider the extremal metrics and study the Chern-Calabi flow on compact complex surfaces in the conformal Hermitian geometry. We shall set up the global existence of the flow. We also discuss the convergence of the global flow on compact complex surfaces.

Given a Hermitian manifold $(M^n,\omega,J)$, $n=dim_{\mathbb C} M$. Let $R^{ch}(\omega)$ be the Chern-Scalar curvature and we introduce the Chern-Calabi functional
$$
\mathbf C(\omega)=\frac{1}{n!}\int_M R^{ch}(\omega)^2 \omega^n.
$$
on the unit volume conformal class $\{\omega\}_1$. For $n=2$, the extremal metric $exp(f)\omega$ satisfies the equation
$$
(-\Delta^{ch}_{\omega})^*\Big(-n\Delta^{ch}_{\omega} f
+ R^{ch}(\omega)\Big)=0.
$$
In this case, the Chern-Calabi flow on compact complex surfaces is simply defined by
$$
 f_t=exp(-2f)\Big((\Delta^{ch}_{\omega})^{*}(-n\Delta^{ch}_{\omega} f+ R^{ch}(\omega)) \Big)
$$
 with the initial data $\omega_0\in \{\omega\}_1$.

Roughly speaking, our main results are below. We first prove the existence of the extremal metric on the conformal class $\{\omega\}_1$ of unit volume. Namely,
on a compact complex Hermitian surface $(M,\omega)$
with the metric $\omega$ being the Gauduchon metric.
Then there is a unique solution (up to a constant) of Chern-Calabi extremal metric problem  in the conformal class $\{\omega\}_1$.
For the study of Chern-Calabi flow on the compact complex surface, we prove that
For a given compact complex Hermitian surface $(M,\omega)$
with $\omega$ is a Gauduchon metric.
Then there is a global flow to Chern-Calabi flow on the compact complex surface. The proof of global flow depends on some hard analysis, which is motivated by the use of Adams (higher dimensional Moser-Trudinger) inequality in the paper of S.Brendle \cite{B}.

\subsection{Useful facts and Chern-Calabi flow}

We fist fix some notations and introduce the Calabi functional and extremal metrics for Chern connections.
Let $M^n$ be a complex manifold of complex dimension $n$
and let $g$ be a Riemannian metric on $M$. Let $J$ be the induced
almost complex structure on $M$ so
$J^2 = - \mbox{id}$ and $J$ is integrable. We assume that $J$ is compatible with the metric $g$, i.e.
$$
 g (u, v) = g (Ju, Jv), \;\; u, v \in TM.
$$

Let $\omega$ be the fundamental (i.e., {\em K\"ahler}) form of $g$ defined
by
$$
\omega (u, v) =  g (Ju, v).
$$

In local coordinates $(z_1, \ldots, z_n)$ on $M$, we set
$$
 g_{j\bk} = g \Big(\frac{\partial}{\partial z_j}, \frac{\partial}{\partial \bz_k}\Big),
  \;\; (g^{i\bj}) = (g_{i\bj})^{-1},
$$
and
$$
 \omega = \sqrt{-1} g_{j\bk} dz_j \wedge d \bz_k.
$$

Recall that the Chern-scalar curvature is
$$
 R^{ch}(\omega)= - g^{i\bj}\frac{\partial^2}{\partial z_i \partial \bz_j}\log\det g_{k\bl}.
 $$

For any real smooth function $f:M\to R$, we define the Chern-Laplacian operator of $f$ by
 $$
 Lf:=\Delta^{ch}_{\omega}f=g^{i\bj}\frac{\partial^2}{\partial z_i \partial \bz_j} f.
 $$
 Then
 $$
 R^{ch}(\omega)= -\Delta^{ch}_{\omega} \log\det g_{k\bl}.
 $$
We make the following notations on $(M,\omega)$ that $d\mu=\frac{\omega^n}{n!}$, $H^k(M)$ is the standard Sobolev space on $(M,\omega)$,
$$
V=\int_M d\mu, \ \ \overline{f}=\frac{1}{V}\int_M f d\mu,
$$
and $\langle \phi, \psi\rangle_f=\int_M \phi \psi e^fd\mu$ for any real $\phi, \psi\in L^2(M)$. $C$ is the positive constant which may change from line to line.

We have the well-known elliptic estimates that
$$
\| f- \overline{f}\|_{H^2} \leq C\|Lf\|_{L^2}
$$
and
\begin{equation}\label{ell}
\| f\|_{H^4} \leq C(\|L^*Lf\|_{L^2}+\|f\|_2),
\end{equation}
where $f\in C^\infty(M,R)$.

Recall that for $\omega(t)=exp(f(t))\omega=exp(f)\omega$,
\begin{equation}\label{chern-sc}
R^{ch}(\omega(t))=exp(-f)(-nL f+ R^{ch}(\omega))
\end{equation}
and for any $\phi\in C^\infty$,
$$
\Delta^{ch}_{\omega(t)}\phi=exp(-f)L\phi.
$$

The Calabi functional on compact complex manifolds is
$$
\mathbf C(\omega)=\frac{1}{n!}\int_M R^{ch}(\omega)^2 \omega^n.
$$
Here we take $\omega$ to be a Gauduchon metric (i.e., $\partial\overline{\partial }\omega^{n-1}=0$ ) and the existence of a Gauduchon metric is guaranteed in \cite{Ga}.
To compute the Euler-Lagrange equation of the functional $\mathbf C(\omega)$ in conformal geometry, we consider the conformal class $\{\omega\}=\{exp(f)\omega; \ f\in C^\infty(M)\}$ of $\omega$ and its volume one subclass
 $$
 \{\omega\}_1=\{\widetilde{\omega}:=exp(f)\omega; \ f\in C^\infty(M), \ \ \int_M \frac{\widetilde{\omega}^n}{n!}=1\}.
 $$
 For $\omega(t)=exp(f(t))\omega\in \{\omega\}$, the Calabi functional for $\omega(t)$ is
$$
\mathbf C(\omega(t))=\frac{1}{n!}\int_M (-nL f(t)+ R^{ch}(\omega))^2 exp((n-2)f(t))\omega^n.
$$
Its t-derivative (with $v=\frac{d}{dt}f (t)$ at $t$) of $\mathbf C(\omega(t))$ is
\begin{align*}
&\frac{d}{dt}\mathbf C(\omega(t))=\frac{1}{n!}\int_M 2(-nL f(t)
+ R^{ch}(\omega))(-L v)exp((n-2)f(t))\\ &+(n-2)(-nL f(t)
+ R^{ch}(\omega))^2 exp((n-2)f(t))v\omega^n\\
&=\frac{1}{n!}\int_M (-L)^*\Big(2exp((n-2)f(t))(-nL f(t)
+ R^{ch}(\omega))\Big)v\\ &+(n-2)(-nL f(t)
+ R^{ch}(\omega))^2 exp((n-2)f(t))v\omega^n\\
&=\frac{1}{n!}\int_M (-L)^*\Big(2exp((n-2)f(t))(-nL f(t)
+ R^{ch}(\omega))\Big)v+(n-2)R^{ch}(\omega(t)^2)exp(nf(t))v\omega^n.
\end{align*}
Note that for $\omega(t)\in \{\omega\}_1$, i.e.,
$$
V(t)=\frac1{n!} \int_M \omega(t)^n=\frac1{n!} \int_M exp(nf(t))\omega^n=1,
$$
we have
$$
V'(t)=\frac1{n!}\int_M exp(nf(t))\ f_t(t)\omega^n=0.
$$
Then the Euler-Lagrange equation for the extremal metrics ($t=1$ and $f=f(1)$) in the class $\{\omega\}_1$ is
\begin{equation}\label{EL}
(-L)^*\Big(2exp((n-2)f)(-nL f
+ R^{ch}(\omega))\Big)+(n-2)(R^{ch}(e^f\omega))^2 exp(nf)=\lambda exp(nf)
\end{equation}
where $\lambda\in \mathbb R$ is the Lagrange multiplier. Note, by integration on both sides,
$$
\lambda =\frac{n-2}{\int_M exp(nf)}\int_M (R^{ch}(e^f\omega))^2 exp(nf).
$$
If $f$ is a solution of the equation \eqref{EL}, then we call the metric $exp(f)\omega$ a \emph{Chern-Calabi extremal metric}. Given the regular initial data $f_0$ such that $\omega_0=e^{f_0}\omega\in \{\omega\}_1$. The related heat flow (i.e., Chern-Calabi extremal metric flow) is
\begin{equation}\label{EL0}
f_t=exp(-nf)(L)^*\Big(2exp((n-2)f)(-nL f
+ R^{ch}(\omega))\Big)-(n-2)(R^{ch}(\omega(t)))^2+\lambda(t)
\end{equation}
where $\lambda(t)$ is given by
$$
\lambda(t)=(n-2)\int_M(
R^{ch}(\omega(t)))^2/V(t).
$$
Since along the flow \eqref{EL0},
\begin{align*}
V'(t)&=\frac1{n!}\int_M exp(nf(t))\ f_t(t)\omega^n\\
&=\int_M(L)^*\Big(2exp((n-2)f)(-nL f
+ R^{ch}(\omega))\Big)-(n-2)(
R^{ch}(\omega(t)))^2 +\lambda(t)=0,
\end{align*}
we have $V(t)=1$, i.e., $\omega(t)\in \{\omega\}_1$ and
$$\lambda(t)=(n-2)\int_M(
R^{ch}(\omega(t)))^2=(n-2)\mathbf{C}(\omega(t)).$$
Note that along the flow,
$$
(L)^*\Big(2exp((n-2)f)(-nL f
+ R^{ch}(\omega))\Big)-(n-2)(R^{ch}(\omega(t)))^2 exp(nf)=(f_t-\lambda(t)) exp(nf).
$$
Then
$$
\frac{d}{dt}\mathbf C(\omega(t))=-\int (f_t-\lambda(t))f_t exp(nf)=-\int_M|f_t|^2exp(nf)\leq 0.
$$
Hence, $\mathbf C(\omega(t))$ (i.e., $\lambda(t)$) is non-increasing along the flow.
The standard method can always provide the short time existence of the flow. Because of the quadratic term $(R^{ch}(\omega(t)))^2$, it is a difficult heat flow.

\subsection{Statements of main results}

In below, we assume $n=2$ and then the equation \eqref{EL} is simply
$$
(-L)^*\Big(-nL f
+ R^{ch}(\omega)\Big)=0.
$$
As it is well-known via a use of maximum principle that when $\omega$ is Gauduchon, $ker\{(-L)^*\}=\{constant\}$.
Then
in the special case when $n=2$, the extremal metric equation above (so called Chern-Calabi extremal metric problem) is
$$
-nL f
+ R^{ch}(\omega)=C.
$$
Integrating over $M$, we find that
$C=\int_M R^{ch}(\omega):=\Gamma_M(\omega)$, which is the Gauduchon degree. In this case, we always can solve the extremal metric equation above to get the solution $f_e$ and the solution is unique up to adding a constant.
Note that the scalar curvature of the extremal metric $exp(f_e)\omega$ is
$$
R^{ch}(exp(f_e)\omega)=exp(-f_e)(-nL f_e+ R^{ch}(\omega))=exp(-f_e) \Gamma_M(\omega),
$$
which may not be a constant.
This gives us the result below.
\begin{thm}\label{thm1} Given a compact complex Hermitian surface $(M,\omega)$.
 Assume that $\omega$ is the Gauduchon metric.
Then there is a unique function $f_e$ satisfying
$$
R^{ch}(\omega)-\overline{R^{ch}(\omega)}=2Lf_e, \ \ \int_M e^{2f_e}\frac{\omega^2}{2}=1,
$$
and the metric $exp(f_e)\omega$ is the unique extremal to Chern-Calabi extremal metric problem  in the conformal class $\{\omega\}_1$.
\end{thm}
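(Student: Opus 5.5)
The plan is to reduce everything to the linear equation $2Lf = R^{ch}(\omega)-c$ on the Gauduchon surface and then use Fredholm theory for $L$. The choice of normalising constant, together with the volume constraint, then fixes $f_e$ completely.

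First I would recall the structure of $L=\Delta^{ch}_\omega$ and its formal $L^2(d\mu)$-adjoint $L^*$. Since $L$ is a second order elliptic operator without zeroth order term, the strong maximum principle gives $\ker L=\{\text{constants}\}$. Since $\omega$ is Gauduchon, $\partial\bar\partial\omega=0$ for $n=2$, so for every $u$
$$
\int_M Lu\,d\mu=\int_M \sqrt{-1}\partial\bar\partial u\wedge\omega=0 .
$$
Hence $L^*1=0$. Combined with the fact recorded above that $\ker L^*=\{\text{constants}\}$ (the index of $L$ is zero, so $\dim\ker L^*=1$), Fredholm theory gives that $Lu=h$ is solvable if and only if $\int_M h\,d\mu=0$, with solution unique up to additive constants. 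The function $h=\frac12\bigl(R^{ch}(\omega)-\overline{R^{ch}(\omega)}\bigr)$ has zero mean by definition, so we get $f_e$ unique up to a constant; call such a solution $f_0$. The term $\overline{R^{ch}(\omega)}$ is the average, and it is the only constant compatible with solvability. Elliptic regularity, via the estimates \eqref{ell}, gives smoothness.

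Next, the volume normalisation $\int_M e^{2(f_0+c)}\frac{\omega^2}{2}=1$ is strictly monotone in $c$. It therefore determines a unique constant $c$, and $f_e=f_0+c$ is unique.

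Finally I would check that $e^{f_e}\omega$ is extremal and that it is the only extremal in $\{\omega\}_1$. By \eqref{chern-sc} with $n=2$, $-2Lf_e+R^{ch}(\omega)=\overline{R^{ch}(\omega)}$ is a constant, so $(-L)^*$ of it vanishes (using $L^*1=0$). Since $n-2=0$, \eqref{EL} reduces to $0=\lambda e^{2f}$ with $\lambda=0$, so $f_e$ is a solution. Conversely, if $f$ solves \eqref{EL} in $\{\omega\}_1$, then $\lambda=0$ because $n=2$. So $-2Lf+R^{ch}(\omega)\in\ker(L^*)=\{\text{const}\}$, and integrating against $d\mu$ with $\int Lf\,d\mu=0$ identifies the constant as $\overline{R^{ch}(\omega)}$. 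Thus $f$ solves the same linear equation, so $f=f_e$ by the uniqueness above.

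The only delicate point is $\ker L^*=\{\text{constants}\}$, i.e.\ that the cokernel of $L$ is exactly spanned by $1$. This rests on the Gauduchon condition. I would justify it by the index-zero argument together with $L^*1=0$, rather than applying the maximum principle directly to $L^*$, since $L^*$ has lower order terms.
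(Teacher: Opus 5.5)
Your proposal is correct and follows the same route as the paper. Both arguments reduce \eqref{EL} at $n=2$ (where $\lambda=0$) to $L^*(-2Lf+R^{ch}(\omega))=0$, use $\ker L^*=\{\text{constants}\}$, identify the constant as $\overline{R^{ch}(\omega)}$ by integration, solve the linear equation up to constants, and fix the constant by the volume normalisation.

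The only difference is how $\ker L^*=\{\text{constants}\}$ is justified: you use the index-zero argument plus $L^*1=0$, whereas the paper cites the maximum principle. Your caution about the maximum principle is unnecessary, since $L^*1=0$ means exactly that $L^*$ has no zeroth-order term (indeed $-L^*u=\Delta_d u-(du,\theta)_\omega$ in the Gauduchon case), so the strong maximum principle applies to $L^*$ directly.
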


In below, we consider the Chern-Calabi flow on complex surfaces.
In complex dimension $n=2$, for $\omega(t)=exp(f(t))\omega$, the Calabi functional is
\begin{equation}\label{calabi0}
\mathbf C(\omega(t))=\frac12 \int_M (-nL f(t)+ R^{ch}(\omega))^2\omega^2=V |\overline{R^{ch}(\omega)}|^2+4\|L(f-f_e)\|^2
\end{equation}
and its t-derivative (with $v=\frac{d}{dt}f (t)$) is
$$
\frac{d}{dt}\mathbf C(\omega(t))=-\int_M (L )^{*}(-nL f(t)+ R^{ch}(\omega))v\omega^2.
$$
In this case, the Chern-Calabi flow on compact complex surfaces is simply given by
 \begin{equation}\label{chern-calabi0}
 f_t=exp(-2f)\Big((L)^{*}(-nL f+ R^{ch}(\omega)) \Big)
 \end{equation}
 with the initial data $\omega_0\in \{\omega\}_1$. Here we recall that $-Lu=\Delta_du+(du,\theta)_\omega$
 and
 $$
-L^{*}u=\Delta_du-(du,\theta)_\omega.
$$

Assume now $(M,\omega)$ is a Gauduchon complex surface.
we have
\begin{equation}\label{calabi}
\frac{d}{dt}\mathbf C(\omega(t))=-\int_M (L)^{*}(-nL f(t)+ R^{ch}(\omega)) v \omega^2.
\end{equation}
Since  the Gauduchon condition, $\partial \bar{\partial} \omega =0$, we can conclude that
$$
\int_{M} L(h) \omega^2=2 \int_{M} \sqrt{-1} \partial \bar{\partial} h \wedge \omega =2 \int_{M} h\sqrt{-1} \partial \bar{\partial} \omega=0.
$$

  On the Gauduchon manifold,  we have 
 the Chern-Calabi flow that
 \begin{equation}\label{chern-calabi}
 f_t=exp(-2f)\Big((L)^{*}(-nL f+ R^{ch}(\omega)) \Big)
 \end{equation}
 with the initial data $f_0$.
 Clearly by standard parabolic theory, we have the short time existence of the flow.
 We may re-write \eqref{chern-calabi} as
 $$
  \omega(t)_t= exp(-2f)(L)^{*}\Big(exp(f)R^{ch}(\omega(t)) \Big)\omega(t)
 $$
 or
 $$
 f_t=exp(-f)\Big((\Delta^{ch}_{\omega (t)})^{*}(exp(f)R^{ch}(\omega(t))) \Big).
 $$

Then we have the result about the global flow.

\begin{thm}\label{thm2} Given a compact complex Hermitian surface $(M,\omega)$.
 Assume that $\omega$ is a Gauduchon metric.
Then there is a global flow to \eqref{chern-calabi} and for any $\tau>0$ there exists a uniform constant $C=C(\tau)>0$ such that
$\|f(t)\|_{L^\infty(M)}\leq C$ for all $t>\tau$.
\end{thm}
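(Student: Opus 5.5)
The plan is to follow Brendle's approach for the Calabi flow on surfaces. We use the monotone Calabi energy to get an $H^2$ bound, the Adams inequality to control exponential integrability of $f$, and then a Gronwall argument on higher-order energies to rule out blow-up.

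\textbf{Step 1 ($H^2$ bound and mean control).} By \eqref{calabi0} and the monotonicity $\frac{d}{dt}\mathbf C(\omega(t))=-\int_M|f_t|^2e^{2f}\,d\mu\le 0$, we have $\|L(f-f_e)\|_{L^2}\le C(f_0)$ for all $t$. By the elliptic estimate $\|f-\overline f\|_{H^2}\le C\|Lf\|_{L^2}$, this gives $\|f-\overline f\|_{H^2}\le C$. The volume normalization $\int_M e^{2f}d\mu=1$, together with Jensen, gives $\overline f\le C$. Since $H^2\subset W^{2,2}$ in real dimension four is the Adams borderline, the Adams inequality gives $\int_M e^{p|f-\overline f|}d\mu\le C(p)$ for every $p$. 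Combining this with $1=\int_M e^{2f}d\mu=e^{2\overline f}\int_M e^{2(f-\overline f)}d\mu$ yields $\overline f\ge -C$. Hence $\|f\|_{H^2}\le C$ and $\|e^{\pm f}\|_{L^p}\le C(p)$ uniformly in $t$. We also retain the space-time bound $\int_0^\infty\!\int_M f_t^2e^{2f}\,d\mu\,dt\le \mathbf C(\omega_0)$.

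\textbf{Step 2 (fourth-order estimate via Gronwall).} Set $u=-2Lf+R^{ch}(\omega)$, so that the flow reads $f_t=e^{-2f}L^*u$. We compute
\[
\frac{d}{dt}\int_M (L^*u)^2 e^{-2f}\,d\mu .
\]
Differentiating, $u_t=-2L(e^{-2f}L^*u)$, so the leading term is $-4\int_M L^*L(e^{-2f}L^*u)\,L^*u\,e^{-2f}$, which is essentially $-4\|L(e^{-2f}L^*u)\|^2_{L^2}$ after integrating by parts with $L$ and $L^*$ adjoint. The remaining lower-order terms involve $f_t(L^*u)^2e^{-2f}$ and commutators with the torsion $\theta$ and $e^{-2f}$. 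We bound these using H\"older with the $L^p$ bounds on $e^{\pm f}$ from Step 1, the Sobolev embedding $H^2\subset L^q$ for all $q<\infty$, and the elliptic estimate \eqref{ell} applied to $L^*L$. Interpolation then absorbs them into the good term, giving
\[
y'(t)\le C\,(1+y(t))\,\Big(1+\int_M f_t^2e^{2f}\,d\mu\Big),\qquad y(t)=\int_M (L^*u)^2e^{-2f}\,d\mu .
\]
Since $\int_0^\infty\int_M f_t^2e^{2f}<\infty$, Gronwall bounds $y$ on $[0,T]$ for every $T$, so $\|f\|_{H^4}\le C(T)$ by \eqref{ell}. Because $H^4\subset C^{1,\alpha}$ in dimension four, this gives $\|f\|_{L^\infty}\le C(T)$. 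Standard parabolic bootstrapping then gives all higher derivatives, which rules out finite-time blow-up and yields a global flow.

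\textbf{Step 3 (uniform $L^\infty$ for $t>\tau$).} To remove the $T$-dependence, we use a local-in-time version of the same inequality. On any interval $[s,s+1]$, the integral $\int_s^{s+1}\!\int_M f_t^2e^{2f}$ is small for large $s$ and bounded in general. Multiplying the Gronwall inequality by a cutoff in time vanishing at $s$, or averaging: choose $t_0\in[s,s+\tau]$ with $\int_M f_t^2e^{2f}(t_0)\le C/\tau$ (which controls $y(t_0)$ through $y\approx\|e^{f}f_t\|^2$), then integrate forward on $[t_0,t_0+1]$. This bounds $y(t)$ uniformly by $C(\tau)$ for $t>\tau$, and hence $\|f(t)\|_{L^\infty}\le C(\tau)$.

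\textbf{Main obstacle.} The hardest step is Step 2, controlling the nonlinear terms in $y'$. The factors $e^{\pm 2f}$ are only $L^p$-bounded, not bounded, and the torsion terms $(du,\theta)$ from the non-K\"ahler Gauduchon structure spoil exact adjointness identities. The plan is to distribute powers carefully using Adams-type integrability and Gagliardo–Nirenberg interpolation between $H^2$ and $H^4$ norms of $f$. I expect the exponent bookkeeping there to be the delicate point.
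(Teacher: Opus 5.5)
Your Step 1 is the paper's argument, but Step 2 has a genuine gap. Since $L^*u=e^{2f}f_t$, your quantity is $y=\int_M e^{2f}f_t^2\,d\mu$, which is exactly the dissipation density, so the inequality you claim is really $y'\le C(1+y)^2$. The computation is in fact exact, with no torsion commutators: $y'=-4\|Lf_t\|_{L^2}^2-2\int_M e^{2f}f_t^3\,d\mu$.

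The bound $y'\le C(1+y)^2$ is exactly what scaling allows in real dimension four (compare $\|w\|_{L^3}^3\le C\|w\|_{L^2}^2\|w\|_{H^2}$). It is also exactly the strongest growth that $\int_0^T y\,dt<\infty$ can absorb, so there is no room to lose even a small power. Yet you need Hölder against $\|e^{\pm 2f}\|_{L^p}$ with $p<\infty$, both for the cubic term and to pass between the unweighted norms produced by Sobolev/Gagliardo--Nirenberg and the weighted $y$. That step always costs such a power. The tools you list yield only $y'\le C_\delta(1+y)^{2+\delta}$, and then $\frac{d}{dt}\log(1+y)\le C_\delta(1+y)^{1+\delta}$ is not controlled by $\int_0^T y\,dt$. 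So the bookkeeping you flag does not close as described.

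A smaller point: a bound on $y$ does not give $\|f\|_{H^4}$ through \eqref{ell}, because $P(f-f_e)=-\tfrac12 e^{f}(e^{f}f_t)$ carries an extra factor $e^{f}$. You only get $Pf\in L^q$ for $q<2$. This still gives $f\in W^{4,q}\subset C^{1,\alpha}$ for $q>4/3$, but you should say so.

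The paper sidesteps the weights by controlling $I(t)=\int_M|P(h)|^2d\mu$ with $h=f-f_e$ instead. With $v=e^ff_t=-2e^{-f}P(h)$ one has $h_t=e^{-f}v$ and $\frac{d}{dt}I=-\int_M L(e^fv)\,L(e^{-f}v)\,d\mu$. After expanding, $e^{f}$ and $e^{-f}$ cancel identically, leaving $(Lv)^2$ plus terms polynomial in $Df$, $D^2f$, $v$, $Dv$. Gagliardo--Nirenberg with $\|f\|_{H^2}\le C$ bounds these by $\epsilon\|v\|_{H^2}^2+C(1+\|f\|_{H^4}^2)\|v\|_{L^2}^2$ with no loss. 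Then \eqref{ell} gives $\|f\|_{H^4}^2\le C(1+I)$, hence $\frac{d}{dt}\log(1+I)\le C\|v\|_{L^2}^2$. The integrable coefficient here is precisely your $y$, controlled by \eqref{ell2}.

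If you prefer to keep $y$, replace Hölder by the Adams inequality applied to $f_t$ itself. Note $\|f_t\|_{H^2}\le C\|Lf_t\|_{L^2}$, because $\int_M e^{2f}f_t\,d\mu=0$ and $\|e^{2f}\|_{L^2}\le C$. Split $\int_M e^{2f}|f_t|^3$ at the level $K=M_0\|Lf_t\|_{L^2}\sqrt{\log(2+\|Lf_t\|_{L^2})}$ with $M_0$ large. This gives $y'\le C\bigl(1+y^2\log(2+y)\bigr)$, and Gronwall for $\log\log(2+y)$ closes. With that repair your Step 3 works and gives a bound for $t>\tau$ depending only on $\tau$ and $\mathbf C(\omega_0)$. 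The paper's bound, by contrast, depends on $I(0)$, i.e.\ on $\|f_0\|_{H^4}$.
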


As for the limit of the global solutions, we have the result below.
\begin{thm}\label{thm3} Given a compact complex Hermitian surface $(M,\omega)$ with the Gauduchon metric $\omega$.
Assume that there is some $C>0$ such that the global solution satisfies $\|f(t)\|_{L^\infty(M)}\leq C$ for all $t>0$.
Then the global flow converges to the solution $f_e$ of Chern-Calabi extremal problem.
\end{thm}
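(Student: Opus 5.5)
The proof will rest on three things: the monotonicity of the Calabi functional, the uniform $L^\infty$ bound in the hypothesis, and the explicit structure \eqref{calabi0} of $\mathbf C$ near $f_e$.

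\textbf{Step 1 (integrability of the velocity).} Put $u=f-f_e$. Since $Lf_e=\frac12\big(R^{ch}(\omega)-\overline{R^{ch}(\omega)}\big)$, we have
$$
-2Lf+R^{ch}(\omega)=-2Lu+\overline{R^{ch}(\omega)}.
$$
Hence \eqref{chern-calabi} becomes
$$
f_t=u_t=e^{-2f}L^*(-2Lu),
$$
using $L^*(\mathrm{const})=0$; this holds because $L^*c=0$ for constants on a Gauduchon surface, which is the integration identity recorded before Theorem \ref{thm2}. The energy identity $\frac{d}{dt}\mathbf C=-\int_M |f_t|^2e^{2f}\,d\mu$ combined with \eqref{calabi0} gives
$$
\int_0^\infty\!\!\int_M |f_t|^2 e^{2f}\,d\mu\,dt\le \mathbf C(\omega_0).
$$
The $L^\infty$ bound makes $e^{2f}$ comparable to $1$, so $\int_0^\infty\|f_t\|_{L^2}^2\,dt<\infty$.

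\textbf{Step 2 (uniform higher regularity).} From \eqref{calabi0}, $\|Lu\|_{L^2}$ is bounded, so $\|u-\bar u\|_{H^2}\le C$. Together with $\|f\|_\infty\le C$ this gives $\|u\|_{H^2}\le C$ for all $t$. Next I would bound $\|u\|_{H^4}$ uniformly for $t\ge 1$. Differentiate the equation in time: with $w=f_t$ one gets
$$
w_t=-2fw\text{-terms}+e^{-2f}L^*(-2Lw),\qquad\text{i.e.}\quad w_t=-2w^2+e^{-2f}L^*(-2Lw).
$$
Multiplying by $we^{2f}$ and integrating yields
$$
\frac{d}{dt}\int_M w^2e^{2f}\,d\mu+4\|Lw\|_{L^2}^2\le C\int_M |w|^3\,d\mu .
$$
The cubic term would be controlled by Gagliardo--Nirenberg in real dimension $4$, using the $H^2$ bound on $w$ coming from $\|Lw\|$ and the $L^2$ smallness of $w$ in time-averaged form. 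A Gronwall argument, in the spirit of the Adams--Gronwall method used for Theorem \ref{thm2}, together with $\int_0^\infty\|w\|_2^2<\infty$, then gives $\|w(t)\|_{L^2}\to 0$ and $\sup_{t\ge1}\|w\|_{L^2}<\infty$. The elliptic estimate \eqref{ell} applied to $L^*Lu=-\frac12e^{2f}w$ gives $\|u\|_{H^4}\le C(\|w\|_{L^2}+\|u\|_{L^2})\le C$, and Sobolev embedding ($H^4\subset C^{1,\alpha}$ in dimension $4$) plus standard parabolic bootstrapping gives uniform $C^k$ bounds. \emph{This step is the main obstacle}: closing the Gronwall inequality requires care with the nonlinear term $-2w^2$ and the $e^{-2f}$ weight. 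If the direct energy estimate does not close, I would instead work with the weighted quantity $\int_M |L^*Lu|^2e^{-2f}$, whose derivative produces a good sign through $L^*L$.

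\textbf{Step 3 (identification of the limit).} By Step 2 we have $\|L^*Lu(t)\|_{L^2}\le C\|w(t)\|_{L^2}\to0$. Since $\ker L^*L=\ker L=\{\text{constants}\}$, the latter by the maximum principle, elliptic theory gives $\|u(t)-\bar u(t)\|_{H^4}\to0$. It remains to fix the constant. The volume constraint $\int_M e^{2f}d\mu=1$ and $\int_M e^{2f_e}d\mu=1$ force
$$
\int_M e^{2f_e}\bigl(e^{2(u-\bar u)}e^{2\bar u}-1\bigr)\,d\mu=0 .
$$
Since $u-\bar u\to0$ uniformly, this gives $\bar u(t)\to0$. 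Hence $f(t)\to f_e$ in $H^4$, and by the uniform $C^k$ bounds and interpolation in every $C^k$. Thus the flow converges to the extremal metric $e^{f_e}\omega$ of Theorem \ref{thm1}.
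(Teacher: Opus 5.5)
Your proof is correct, and it reaches the key uniform bound by a genuinely different and cleaner route than the paper. The paper differentiates the unweighted quantity $I(t)=\int_M |P(h)|^2\,d\mu$. Its derivative $-\int_M L(e^{f}v)\,L(e^{-f}v)\,d\mu$ has no sign, so the paper expands this product pointwise, controls the error terms by Sobolev interpolation against $\|f\|_{H^4}$ as in \eqref{key2}, and closes using the elliptic control of $\|f\|_{H^4}$ by $I$.

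You instead differentiate the weighted velocity energy $Y(t)=\int_M e^{2f}w^2\,d\mu$ with $w=f_t$. Since $Y=4\int_M e^{-2f}|L^*Lu|^2\,d\mu$, your ``fallback'' quantity is in fact the same one. The weight makes the dissipation exact:
\[
\frac{d}{dt}Y=-4\|Lw\|_{L^2}^2-2\int_M e^{2f}w^3\,d\mu .
\]
Your worry that this might not close is unfounded. In real dimension four, $\|w\|_{L^3}^3\le C\|w\|_{L^2}^2\|w\|_{H^2}$ and $\|w\|_{H^2}\le C(\|Lw\|_{L^2}+\|w\|_{L^2})$, so after absorbing you get $Y'\le CY(1+Y)$. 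Together with $\int_1^\infty Y\,dt<\infty$, this gives $\sup_{t\ge1}Y<\infty$, then $Y'\le CY$, and hence $Y(t)\to 0$. Under the $L^\infty$ hypothesis $Y$ and $I$ are comparable, so both arguments end in a Gronwall inequality of the same shape. Yours, however, bypasses the product expansion and the interpolation with $\|f\|_{H^4}$ entirely.

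Your route also gives a cleaner conclusion. You obtain $\|f_t(t)\|_{L^2}\to0$ along the whole flow, and you fix the additive constant explicitly from $\int_M e^{2f}\,d\mu=\int_M e^{2f_e}\,d\mu=1$. The paper only extracts a limit along a sequence $t_i$ with $\|f_t(t_i)\|\to0$ and then invokes uniqueness of $f_e$ to assert convergence of the full flow. That leaves implicit both the passage from the subsequence to all $t\to\infty$ and the normalization of the limit.
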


The plan of the paper is as follows.
 In section \ref{sect2}, we derive uniform estimate of norms of the solution $f$ in any finite time. We give a proof of global Chern-Calabi flow in section \ref{sect3}. We study the global convergence of the Chern-Calabi flow in section \ref{sect4}. In the last section, We shall discuss this higher dimensional flow. We derive the Bochner type formula and point out the possibility of the method proposed by Chang in \cite{Chang}.

\section{Estimations of the average of $f$ and norms of $f$}\label{sect2}

 Since the volume is invariant along the flow \eqref{chern-calabi}, we have that
 for any $t>0$,
$$
\frac12 \int_M exp(2f)\omega^2=1.
$$
Using Jensen's inequality and  $\int_M exp(2f)=2$, we have
$$
exp (2\int_M f)\leq 2
$$
and then $\int_M f\leq C$. We need to find the lower bound for $\int_M f$.

Note that along the flow, we have
\begin{equation}\label{star}
\frac{d}{dt}\mathbf C(\omega(t))=-\int_M|f_t|^2exp(2f) \omega^2=-\int_M |(L )^{*}(- nL f+ R^{ch}(\omega))|^2\omega^2\leq 0.
\end{equation}
Integrating \eqref{star} on $[0,T]$, where $T$ is the maximal existence time of the flow, we get
$$
\mathbf C(\omega(T)) +\int_0^Tdt\int_M |f_t|^2exp(2f)\omega^2=C(\omega(0))
$$
or
\begin{equation}\label{star1}
C(\omega(T))+\int_0^Tdt\int_M |(L )^{*}(- nL f+ R^{ch}(\omega))|^2\omega^2=C(\omega(0)).
\end{equation}

Set $H_{0}^{1}(M)=\Big\{u\in H^{1}(M)|\; \int_{M}u =0\Big\}.$ Since $(M,\omega)$ is a Gauduchon metric, $Ker \Delta^{ch}|_{H_{0}^{1}(M)}=Ker( \Delta_{d}+(d\cdot, \theta)|_{H_{0}^{1}(M)}=0$. Applying the Fredholm alternative theorem, we can conclude that, for any $g\in L^{2}(M),$ there exists an unique weak solution $u\in H_{0}^{1}(M)$ satisfying
$$
\Delta_{d} u+(du,\theta)_{\omega}=g.
$$
In particular,
\begin{equation}\label{est.u}
\|u\|_{H^{1}_{0}(M)}\leq C(\|g\|_{L^{2}}).
\end{equation}
Since $-L(u-\bar u)=\Delta_d(u-\bar u)+(d(u-\bar u),\theta)_\omega$,we have
$$
\|\Delta_{d} u\|_{L^{2}}\leq \|L(u)\|_{L^{2}}+C\|du\|_{L^{2}}\leq C(\|L(u)\|_{L^{2}}+1),
$$
where we  used $u-\bar u$ to \eqref{est.u} in the last inequality.
Recall that on the compact complex surface, we have
$$
\phi(x)-\overline{\phi}=\int_M (-\Delta_d \phi)(y)G(y,x) d\mu_\omega
$$
where $G(x,y)=G(y,x)$ is the Green function on $(M,\omega)$.
Then it holds the Adams inequality (see Theorem 1 and Theorem 2 in \cite{A} and also see \cite{B}) that there are some uniform constants $\alpha_2$ and $C=C_M>0$ such that for any $\phi\in C^\infty$ with
$L\phi\in L^2(M)$, it holds
\begin{equation}\label{adams}
\int_M exp \Big(\alpha_2\frac{(\phi-\overline{\phi})^2}{\int_M (-L \phi )^2}\Big)\leq C
\end{equation}
where
$$
\overline{\phi}=\frac{1}{Vol(M,\omega)}\int_M \phi .
$$
Note that for $\phi=f$, the uniform estimate of $\int_M (-L \phi )^2$ can be derived from the conformal invariant quantity $\int_M |R^{ch}(\omega)|^2 \omega^2$.   Then
$$
\int_M exp(2(f-\overline{f}))\leq Cexp(\int_M (-L f )^2)\leq C.
$$
By this we have
$$
2=\int_M exp (2f)=\int_M exp (2f-2\overline{f}) \cdot exp(2\overline{f})\leq C exp(2\overline{f}).
$$
This gives us that $\overline{f}\geq C$ and the constant $C$ is an uniform constant.

Set $M_{+}=\{x\in M; f(x)\geq  0\}$ and $M_{-}=\{x\in M; f(x)<  0\}$.
Since we have for any $p\in \mathbf R$,
$$
\int_M exp(p(f-\overline{f}))\leq C_pexp(\int_M (-L f )^2)\leq C_p
$$
and
$$
\int_M exp(pf)\leq C_pexp(\int_M (-L f )^2)\leq C_p.
$$
Then for $p>0$,
$$
\int_{M_+} exp(pf)\leq \int_M exp(pf)\leq C_p.
$$
and for any $p<0$,
$$
\int_{M_-} exp(pf)\leq \int_M exp(pf)\leq C_p.
$$
Hence, we have for any $p>0$,
$$
\int_M exp(p|f|)= \int_{M_+} exp(pf)+\int_{M_-} exp(-pf)\leq 2C_p,
$$
which implies that for any $p>1$, $\|f\|_{L^p(M)}\leq C_p$.
The energy identity \eqref{star1} and the estimates above implies that for $t>0$ and any $p>1$,
\begin{equation}\label{star2}
f\in L_t^2 H^4_x(M_T)\cap L^p_tH^2_x(M_T), \ \ M_T=M\times [0,T].
\end{equation}
If $T=\infty$, we are done, otherwise, $T<\infty$. We will show below that $T<\infty$ is impossible.

We now make a remark.
We can simplify the estimate above by using the function $f_e$.
We define
 $$
 v(t)=exp(f)f_t=exp(-f)\Big((L)^{*}(-2L f+ R^{ch}(\omega)) \Big).
 $$
 Define $h=f-f_e$,
 $$
 Q(\omega)=-L^{*}R^{ch}(\omega) \ \ \text{and} \ \ P=L^{*}L.
 $$
 Then we have
 $$
 v(t)=- 2exp(-f) Pf- exp(-f)Q(\omega)=-2 exp(-f) P(h).
 $$
 Hence, we have
 $~~ f_t=exp(-f)v,
 $
 and  $$
 h_t=f_t=-2exp(-2f)P(h).
 $$
 Set
$$
J(t):=\int_M |L(h)|^2d\mu.
$$
and we compute the growth of the integral $J(t)$ along the flow \eqref{chern-calabi}.

Note that
\begin{align*}
 \frac{d}{dt} J(t)& =2\int_M f_t P(h) d\mu \  \\
&= -4\int_M exp(-2f) |P(h)|^2d\mu =-\|v(t)\|_2^2.
\end{align*}
In the term of $f$, we can write it as
$
\frac{d}{dt} \mathbf C(f(t))=-4\int_M e^{2f} f_t^2d\mu.
$
Then we have
\begin{equation}\label{ell2}
J(t)+\int_0^t\|v(s)\|_2^2ds=J(0).
\end{equation}
Then
\begin{equation}\label{ell3}
\|Lf\|_2^2\leq \frac12 (\mathbf C(f_0)+\|R^{ch}(\omega)\|^2).
\end{equation}

\section{Proof of Theorem \ref{thm2}}\label{sect3}
We prove the global existence of the flow in this section.

The general idea to prove the existence of global flow is to use Moser iteration to derive the $L^\infty$ bounds of the solution and extend the flow beyond the finite maximal time to get contradiction.
We may outline the proof of this kind.
Set $u=exp(f)$. Then $\partial_iu=u\partial_if$,
$$
Lu=uL f+u|df|^2_\omega.
$$
Using \eqref{chern-sc}, we have
$$
exp(f)R^{ch}(\omega(t))=- nL f+ R^{ch}(\omega).
$$
Then
$$
-Lu=u (R^{ch}(\omega(t))u -R^{ch}(\omega))-u|df|^2_\omega\leq u (R^{ch}(\omega(t))u -R^{ch}(\omega))\leq Au,
$$
where
$$
A=|R^{ch}(\omega(t))u -R^{ch}(\omega)|\in L^2(M).
$$
Then we can use Moser iteration (see Proposition Proposition 3.4 in \cite{Chang}) to conclude that $u\in L^\infty$ up to the time $T<\infty$.
This give us the global existence of the flow.

In below, using the argument similar to \cite{B}, we mainly derive the $H^4$ norm bound for the solution. Similar bounds for higher order may also be derived.
We now prove Theorem \ref{thm2}.
\begin{proof}
 Along the flow, we set
 $$ I(t)=\int_M |P(h)|^2d\mu.
 $$
 Recall $\  P=(\Delta^{ch}_{\omega})^{*}\Delta^{ch}_{\omega}$.  Define
 $$
 Q(\omega)=-L^{*}R^{ch}(\omega).
 $$
 Note that
 \begin{align*}
 \frac{d}{dt} I(t)& =2\int_m P(h)P(f_t)d\mu \\
 &=-\int e^{f}v P(e^{-f}v)d\mu=-\int L(e^{f}v) L(e^{-f}v)d\mu.
 \end{align*}

For the estimate of the term $I$, we compute that
\begin{align*}
&\Delta^{ch}(exp(f)v) \Delta^{ch}(exp(-f)v)\\
&=\Big(\Delta_d(exp(f)v)+\exp(f)(dfv+dv,\theta) \Big)\Big(\Delta_d(exp(-f)v)+\exp(-f)(-dfv+dv,\theta)\Big) \\
&=\Big(\Delta_d v+(|df|_{\omega}^{2}+\Delta_{d}f)v+2\langle df, dv\rangle_\omega+(dfv+dv,\theta) \Big)\Big(\Delta_d v+(|df|_{\omega}^{2}-\Delta_{d}f)v\\
&\quad-2\langle df, dv\rangle_\omega+(-dfv+dv,\theta) \Big)\\
&\geq \frac12 (\Delta_d v)^2-C(|df|_\omega^2v)^2-C\langle df, dv\rangle_\omega^2-C(v\Delta_d f)^2-C|df|_\omega^2v^2-C |dv|_\omega^2.
\end{align*}

Then we have
\begin{align*}
&\int_M\Delta^{ch}(exp(f)v) \Delta^{ch}(exp(-f)v)\\
&\geq \frac12 \int_M(\Delta_d v)^2-C\int_M(|df|_\omega^2v)^2+\langle df, dv\rangle^2+(v\Delta_d f)^2+|df|_\omega^2v^2+ |dv|_\omega^2.
\end{align*}
By using the Sobolev inequality we have
$$
\|df\|_{L^8}\leq C\|f\|_{W^{2,8/3}}\leq  C \|f\|_{W^{2,2}}^{3/4} \|f\|_{C^{2}}^{1/4}\leq  C \|f\|_{W^{2,2}}^{3/4} \|f\|_{W^{4,2}}^{1/4},
$$
$$
\|v\|_{L^4} \leq C \|v\|_{L^2}^{1/2} \|v\|_{W^{2,2}}^{1/2}\leq \|v\|_{W^{2,2}},
$$
and then by Holder inequality
$$
\int_M(|df|_\omega^2v)^2\leq \|df\|_{L^8}^{4}\|v\|^{2}_{L^4} \leq C \|f\|_{W^{2,2}}^{3} \|f\|_{W^{4,2}}\|v\|^2_4.
$$
Then we have
$$
\int_M(|df|_\omega^2v)^2\leq C \|f\|_{W^{2,2}}^{3} \|f\|_{H^{4}}\|v\|_{H^{2}}\|v\|_{L^{2}}.
$$
Note that
$$
\|v\|_{W^{1,8/3}} \leq C \|v\|_{H^1}^{1/2} \|v\|_{W^{2,2}}^{1/2}\leq C \|v\|_{L^2}^{1/4} \|v\|_{W^{2,2}}^{3/4}.
$$
Then
\begin{align*}
\int_M\langle df, dv\rangle^2& \leq\|df\|^{2}_{L^8}\|dv\|^{2}_{L^{8/3}}\\
&\leq \|f\|_{H^2}^{3/2}\|f\|_{H^4}^{1/2} \|v\|_{L^2}^{1/2}\|v\|_{H^{2}}^{3/2}.
\end{align*}
The other term $\int_M (v\Delta_d f)^2$ can be estimated similarly.
\begin{align*}
\int_M (v\Delta_d f)^2& \leq \|\Delta_d f\|_{L^{4}}^2\|v\|_{L^4}^2\\
&\leq \|f\|_{H^2}\|f\|_{H^4} \|v\|_{L^2}\|v\|_{H^{2}}.
\end{align*}
Similar estimates also holds for $|df|_\omega^2v^2+ |dv|_\omega^2$.
Then we have
\begin{align*}
&\int_M(|df|_\omega^2v)^2+\langle df, dv\rangle^2+(v\Delta_d f)^2+|df|_\omega^2v^2+ |dv|_\omega^2\\
\leq &C(\|v\|_{H^2}\|v\|_2\|f\|_{H^4}+\|v\|_{H^2}^{3/2}\|v\|_2^{1/2}\|f\|_{H^4}^{1/2}+\|v\|_{H^{1}}^{2}).
\end{align*}
Then we have
\begin{align}\label{key2}
 \frac{d}{dt} I(t)& =-\frac{2}{n}\int_M exp(f)v Pf_t \  \nonumber\\
&\leq -\frac{1}{n}\int_M(\Delta_d v)^2 \nonumber\\
& +C(\|v\|_{H^2}\|v\|_2\|f\|_{H^4}+\|v\|_{H^2}^{3/2}\|v\|_2^{1/2}\|f\|_{H^4}^{1/2}+\|v\|_{H^{1}}^{2}).
\end{align}
Recall by \eqref{ell2} we have that $\int_0^T\|v\|_2^2\leq C$.
From this and H\"older inequality we then have, for small $\epsilon>0$,
$$
\|v\|_{H^2}\|v\|_2\|f\|_{H^4}+\|v\|_{H^2}^{3/2}\|v\|_2^{1/2}\|f\|_{H^4}^{1/2}
\leq \epsilon \|v\|_{H^2}^2+C_\epsilon \|f\|_{H^4}^2\|v\|_2^2.
$$
Hence, by the elliptic estimate $\|v\|_{H^2}^2\leq C(\|\Delta_dv\|_2^2+\|v\|_2^2)$, we have for some uniform $c>0$,
$$
\frac{d}{dt} I(t)+ c\|v\|_{H^2}^2\leq C(\|f\|_{H^4}^2+1)\|v\|_2^2.
$$
Recall $P(f)=P(h)+P(f_e)$.
By the elliptic estimate \eqref{ell} and \eqref{ell3} we have
$$
\|f(t)\|_{H^4}\leq C(1+I(t))
$$
and then
$$
\frac{d}{dt} \log (I(t)+1)\leq C\|v\|_2^2.
$$
Hence, we have
$$
(I(t)+1)\leq C, \ \ \ \forall t\in (0,T],
$$
which give the desired uniform $H^4$ norm control.

By Sobolev inequality
 for any $p>1$,$$f\in H^4(M)\hookrightarrow W^{3,4}(M)\hookrightarrow W^{2,p}(M).$$
By Sobolev inequality again, we know that $f\in C^{1,\alpha}$ (for some $\alpha\in (0,1)$) is uniformly bounded in $M_T$. In particular, we have some uniform constant $C$ such that $0<C^{-1}\leq e^{\pm f}\leq C<\infty$.
From \eqref{chern-sc}, we also know that
$R^{ch}(\omega(t))\in H^2$. By Sobolev inequality we know that $R^{ch}(\omega(t))\in L^p$ for any $p>1$.

Set $a(x,t)=e^{-f}$. Then we have the parabolic equation
$$
h_t+a(x,t)Ph=0
$$
and use the interior estimate in Theorem 2.2(I) in \cite{DZ2015} with $d=4$, we can conclude that for any metric ball $B_r$ in coordinate systems and $t_0>r^4$ $Q_r=(t_0-r^4,t_0)\times B_r$, there holds the uniform estimate
$$
\|h_t\|_{C_x^{\alpha}(Q_r)}+\|\nabla^4h\|_{C_{(x,t)}^{\alpha/4,\alpha}(Q_r)}\leq C \|h\|_2.
$$
With this understanding, we can obtain the uniform estimate of any higher derivatives of the solution $f$ for $t>\tau>0$. Then $T=\infty$, that is, the solution is a global one.
This completes the proof.

\end{proof}
We remark that we may also use the similar method as above to get higher order norm estimate. Let $H=\Delta_\omega^{ch} f$. Then $H\in H^2$. Note that
$$
L exp(-2f)= -2exp(-2f) H+4exp(-f)|\partial f|^2, \ \ H_t=L(f_t)
$$
and using the equation \eqref{chern-calabi}, we obtain
\begin{equation}\label{more}
H_t=L \Big(exp(-2f)(-n(\Delta_\omega^{ch})^{*}H+Q )\Big).
\end{equation}
Note that $(\Delta_\omega^{ch})^{*}=\Delta_\omega^{ch}+2(d\cdot,\theta)_{\omega}$ and
\begin{align*}
L[exp(-2f)u]&=g^{i\bj}\partial_i(-2exp(-2f)f_{\bj} u+exp(-2f)u_{\bj})\\
&=exp(-2f)\Big(4|\partial f|^2u-2g^{i\bj}(f_{\bj} u_i+u_{\bj} f_i) -\Delta_\omega^{ch}u\Big).
\end{align*}
The equation \eqref{more} is of the similar type as the equation \eqref{chern-calabi}. The extra term $2g^{i\bj}(f_{\bj} u_i+u_{\bj} f_i)$ with $u=(\Delta^{ch})^{*} H$ can be treated using the Bochner type formula  \eqref{star8} derived in section \ref{sect3}.
 Repeating the argument above to \eqref{more} or applying standard parabolic theory to the equation \eqref{chern-calabi}, we know that $H\in H^4$, which then implies that $f\in C^{3,\alpha}$ and $R^{ch}(\omega(t))\in C^{1,\alpha}$. Again, we may get higher order derivative estimates for $f$, which implies that we can extend the solution beyond time $T$. Hence $T=\infty$.

\section{Convergence of Chern-Calabi flow}\label{sect4}

 In this section, we consider the asymptotic behavior of the global Chern-Calabi flow obtained in Theorem \ref{thm2}.

 We now prove Theorem \ref{thm3}.
 \begin{proof}
 Since we have a global solution to the Chern-Calabi flow \eqref{chern-calabi0}, by the energy identity \eqref{star1}
 we have $t_i\to\infty$ such that
 $$
 \int_M |f_t|^2exp(2f(t_i))\omega^2\to 0.
 $$
  Assume that there is some $C>0$ such that $\|f(t)\|_{L^\infty(M)}\leq C$ for all $t>0$. Then all constant $C(T)$ in \eqref{key2} do not depend on $T$ and they unform constants. Then we can obtain from \eqref{key2} that
  $$
\frac{d}{dt} I(t)\leq C(\|f\|_{H^4}^2+1)\|v\|_2^2\leq C(I(t)+1)\|v\|_2^2.
$$
Choose $T>0$ large such that $C\int_T^\infty \|v\|_2^2<1/2$. Then we have for any $t>T$,
 $ \ \
\frac{d}{dt} \log (I(t)+1)\leq C(\|v\|_2^2) \ \
$
 and then
 $ \
 \log \frac{(I(t)+1)}{(I(T)+1)}\leq 1/2, \
 $
 which implies the uniform $C^{1,\alpha} (M)$ of $f(t)$ for $t\geq T$. By bootstrap argument, we know that $f(t)$ is uniformly bounded in
 $C^{4,\alpha}(M)$. Hence we have the limit $f_\infty(x)$ of $f(t_i)$ in  $C^{4,\alpha}(M)$ and $f_\infty$ satisfies the limiting equation
 $$
 (L )^{*}(-nL f_\infty(x)+R^{ch}(\omega))=0
 $$
 Then we have
 $\ \
 -nL f_\infty(x)+ R^{ch}(\omega)=\overline{R^{ch}(\omega)}.
 $
 Since the limit $f_\infty(x)$ is the unique function $f_e$, we can conclude the flow converges to $f_e$ as $t\to \infty$.
 This completes the proof of Theorem \ref{thm3}. \end{proof}

\section{Other problems and Bochner formula for Chern connection}\label{sect5}

In this section, we consider the relevant Bochner formula for the elliptic method of the higher dimensional geometric flow \eqref{EL0} on the Gauduchon compact complex manifolds. For higher dimensional flow \eqref{EL0}, there is finite time blow-up solution and the result may be presented in latter chances.
By assuming the flow $f\in L^\infty$ and $f\in H^4$ for the higher dimensional Chern-Calabi extremal metric heat flow \eqref{EL0}, we can obtain the global flow.
We also propose a fourth order elliptic problem, which is related to the Kazdan-Warner problem in two dimensional Riemannian geometry.

As we have said before, it is possible to use similar argument as in \cite{Chang} to derive the uniform bound of the solution. The only part missing the Chern connection in the local Hermitian geometry is the Bochnor type formula.

Recall that
in local coordinates $z = (z_1, \ldots, z_n)$, for $u\in C^\infty$,
$z_j = x_j + \sqrt{-1} y_j$, we use notations such as $\partial_k=\frac{\partial}{\partial z_k}$ and
\[ u_i = \nabla_{\frac{\partial}{\partial z_i}} u, \;
   u_{ij} = \nabla_{\frac{\partial}{\partial z_j}}
             \nabla_{\frac{\partial}{\partial z_i}} u, \;
   \mbox{etc} \]
and
$$
\langle X,Y\rangle=g(X,\overline{Y}), \ \ |\partial u|^2=g^{i\bj}u_iu_{\bj}, \ \ |\partial \overline{\partial} u|^2=g^{k\bl}g^{i\bj}u_{i\bl}u_{k\bj}, \ \ |D^2 u|^2=g^{k\bl}g^{i\bj}u_{ik}u_{\bj\bl}.
$$
 For second derivatives of $u$, we have
 $$
 u_{i\bj} - u_{\bj i}  = 0, \;\;
 u_{ij} =\partial_j \partial_iu-\Gamma_{ji}^k \partial_k u, \ \ \  u_{ij} - u_{ji}  = T_{ij}^l u_l,
 $$
 where $T=(T_{ij}^l)$ is the torsion of the Chern connection.

 Recall here that the curvature of Chern connection is given by
$$
 R (u, v) w  = \nabla_u \nabla_v w - \nabla_v \nabla_u w - \nabla_{[u,v]} w.
$$
In local coordinates $(z_i)$, we have
\begin{equation}
\label{chern-curv}
\begin{aligned}
 R_{i\bj k\bl} \equiv &\, R \Big(\frac{\partial}{\partial z_i},
 \frac{\partial}{\partial \bz_j}, \frac{\partial}{\partial z_k},
 \frac{\partial}{\partial \bz_l}\Big)=g(R (\frac{\partial}{\partial z_i},
 \frac{\partial}{\partial \bz_j}) \frac{\partial}{\partial z_k}, \frac{\partial}{\partial \bz_l}) \\
 = \,& - g_{m \bl} \frac{\partial \Gamma_{ik}^m}{\partial \bz_j}
 = - \frac{\partial^2 g_{k\bl}}{\partial z_i \partial \bz_j}
       + g^{p\bq} \frac{\partial g_{k\bq}}{\partial z_i}
                  \frac{\partial g_{p\bl}}{\partial \bz_j}.
         \end{aligned}
\end{equation}
Then we have
$$
u_{i k \bj}=(u_{ki} )_{\bj} + (T_{ik}^l u_l)_{\bj}=u_{ki\bj}+(T_{ik}^l)_{\bj}u_l+T_{ik}^l u_{l \bj}.
$$
Since
$~~
u_{i \bj k} - u_{i k \bj} = \,  - g^{l\bm} R_{k \bj i \bm} u_l,
$
we then have
$$
u_{i \bj k}=u_{ki\bj}+(T_{ik}^l)_{\bj}u_{l}+T_{ik}^l u_{l \bj}-g^{l\bm} R_{k \bj i \bm} u_l
$$
Hence
$$
(\Delta_{\omega} u)_k=(g^{i \bj})_ku_{i\bj }+g^{i \bj}(u_{ki\bj}+ (T_{ik}^l)_{\bj}u_{l}+T_{ik}^l u_{l \bj}-g^{l\bm} R_{k \bj i \bm} u_l).
$$
which is,
\begin{equation}\label{star8}
(\Delta_{\omega} u)_k=\Delta_{\omega}^{ch}u_{k}+(g^{i \bj})_ku_{i\bj }+  g^{i \bj}(T_{ik}^l)_{\bj}u_{l}+ g^{i \bj}T_{ik}^l u_{l \bj}-g^{l\bm} g^{i \bj}R_{k \bj i \bm} u_l.
\end{equation}

To derive the Bochner formula, we need to more relations for the smooth function $u\in C^\infty(M)$:
\begin{equation}
\label{ma6}
\left\{
\begin{aligned}
u_{i \bj \bk} - u_{i \bk \bj} = \, & \ol{T_{jk}^l} u_{i\bl},  \ \ \  u_{i \bj k} - u_{i k \bj} = \,  - g^{l\bm} R_{k \bj i \bm} u_l, \\
 u_{i j k} - u_{i k j} = \, & g^{l\bm} R_{jki\bm} u_l + T_{jk}^l u_{il}.
\end{aligned}
 \right.
\end{equation}

Using the relation
$$
R_{i \bj k\bl} - R_{k \bj i\bl}
   =  g_{m \bl} \frac{\partial T_{ki}^m}{\partial \bz_j}
   = g_{m \bl} \nabla_{\bj} T_{ki}^m
   $$
   and \eqref{star8},
we can obtain
\begin{equation}\label{ma7}
(\Delta_{\omega} u)_k=\Delta_{\omega}^{ch}u_{k}+(g^{i \bj})_ku_{i\bj }+ g^{i \bj}T_{ik}^l u_{l \bj}-g^{l\bm} g^{i \bj}R_{i \bj k \bm} u_l.
\end{equation}
Then
\begin{align*}
\Delta^{ch}|\partial u|^2&= g^{k \bl} \nabla_k\nabla_{\bl} |\partial u|^2\\
&=  g^{k \bl} \Big( \nabla_k g^{i\bj}[\nabla_{\bl}u_iu_{\bj}+u_i\nabla_{\bl}u_{\bj} ] \Big) \\
&=  g^{k \bl} \Big(  g^{i\bj}[u_{i \bl k}u_{\bj}+u_{i \bl }u_{\bj k}+u_{ik}u_{\bj \bl}+\nabla_ku_{\bj \bl} u_{i} ] \Big) \\
&=  g^{k \bl} g^{i\bj}u_{i \bl k}u_{\bj}+g^{k \bl} g^{i\bj}u_{\bj   \bl k}u_{i}+|\partial \overline{\partial} u|^2+|D^2u|^2.
\end{align*}
With this understanding, by using \eqref{ma6}, \eqref{ma7} and by using Cauchy-Schwarz inequality to treat the torsion parts, we then obtain the Bochner type inequality below.
For any $\epsilon>0$, there exists a constant $C_\epsilon>0$ such that for any smooth function $u$,
\begin{equation}\label{bochner}
\Delta^{ch}|\partial u|^2\geq 2Re \langle \partial u, \partial \Delta^{ch} u\rangle+(1-\epsilon)|D^2u|^2+(1-\epsilon)|\partial\overline{\partial} u|^2-C_\epsilon |\partial u|^2,
\end{equation}
which can also be written as
\begin{equation}\label{bochner1}
-\frac12\Delta_d|du|^2\geq - \frac12 \langle d u, d \Delta_d u\rangle+(1-\epsilon)|D^2u|^2+(1-\epsilon)|\partial\overline{\partial} u|^2-\frac12 C_\epsilon |du|^2,
\end{equation}
Here we used
$$
\langle d|\partial u|^2,\theta\rangle\leq  C_{\epsilon} |du|^2+\epsilon|D^2u|^2+\epsilon|\partial\overline{\partial} u|^2
$$
Similarly, since there is a torsion term, we may also have Bochner type inequality as below.
\begin{equation}\label{bochner2}
-\frac12\Delta_d|du|^2\leq - \frac12 \langle d u, d \Delta_d u\rangle+(1+\epsilon)|D^2u|^2+(1+\epsilon)|\partial\overline{\partial} u|^2+\frac12 C^{\prime}_\epsilon |du|^2.
\end{equation}

We now consider the higher order derivative regularity of the flow \eqref{EL0}.
Recall the assumption that $f\in L^\infty$ and $f\in H^4$. Then
$f\in W^{3,4}\hookrightarrow W^{2,p}$ for any $p>1$,
$R^{ch}(\omega(t))\in H^2$, and $R^{ch}(\omega(t))\in W^{1,\frac{2n}{n-2}}$.
 By Sobolev embedding theorem, we know that for any $p>4$, we have
$f\in W^{2,p}\hookrightarrow C^{1,\alpha}$. Then we know that the nonlinear term in \eqref{EL0}
$$
-(n-2)(-L f
+ R^{ch}(\omega))^2 exp((n-2)f)\in H^2\in L^p
$$
for any $p>1$,
and $f$ is smooth via a bootstrap argument, which implies that we can extend the solution beyond $T$.

We want to propose a question similar to Kazdan-Warner problems on two dimensional Riemannian manifolds. It is interesting to study the prescribed curvature like problem in the following formulation. Given some nontrivial smooth function $K(x)$ on $M$. Is there a solution to the "curvature" equation
 \begin{equation}\label{soliton1}
 (L)^* (-L\lambda(x)+ R^{ch}(\omega))=K(x) e^{2\lambda(x)}, \ \ \text{on}\ \ M.
 \end{equation}
 Recall we have set $Q(x)=(L)^{*} R^{ch}(\omega)(x)$ and note that $\int_M Q(x)=0$. Then the necessary condition to have a solution $u$ is
 $ \ \
 \int_M K(x) e^{2\lambda(x)}=0.\ \
 $
 Hence, we ask the function $K(x)$ changes signs on $M$.
 Define on $H^2$ the functional
 $$
 J(\lambda)=\frac12 \int_M\Big( |L \lambda(x)|^2+K(x) e^{2\lambda(x)}\Big)-\int_M Q(x)\lambda(x).
 $$
 Then \eqref{soliton1} is the Euler-Lagrange equation of the functional $J(\cdot)$ on $H^2$.

 To find a solution to \eqref{soliton1}, we may first solve
 $$
 -L\phi=R^{ch}(\omega)-\overline{R^{ch}(\omega)}
 $$
 and it is standard to obtain the unique solution $\phi$ with $\int_M \phi(x)=0$. Inserting $R^{ch}(\omega)(x)=-L\phi(x)+\overline{R^{ch}(\omega)}$ into \eqref{soliton1} we obtain the equation
$$
 -(L)^*L(\lambda(x)+\phi(x) )=K(x) e^{2\lambda(x)}, \ \ \text{on}\ \ M.
$$
 Set $u=\lambda(x)+\phi(x)$. Then we transform the equation \eqref{soliton1} into
  \begin{equation}\label{soliton2}
 -(L)^*L u(x) =\widetilde{K(x)} e^{2u(x)}, \ \ \text{on}\ \ M,
  \end{equation}
 where
 $
 \widetilde{K}(x)=K(x) e^{-2\phi(x)}.
 $
 Still the necessary condition to solve the equation \eqref{soliton2} is the solution $u$ satisfies
 $ \ \
 \int_M\widetilde{K}(x) e^{2u(x)}=0. \ \
 $
 One can not guess that it can be solved as in Kazdan-Warner problem on 2-dimensional Riemannian manifolds.
  So we leave this problem open for interested readers.

\vspace{0.3cm}

{\small{\bf{Acknowledgments:}} {Part of this work was done during Ma's visit to Tianyuan mathematical center in southwest China in January of 2025. The authors would like to thank Professors Jixiang Fu, Xingwang Xu, and Bin Zhou for their helpful discussions. This work was reported at the Workshop on Geometric Analysis held in Wuhan from March 29 to 30, 2025. In May 2025, we learned  that Professors Xiaokui Yang and Kaijie Zhang \cite{YZ} had obtained similar result to our Theorem 1 of this paper and we would like to thank  Mr. H.Y. Chen for telling us the reference \cite{YZ}.

}

\section*{Declarations}
\subsection*{Funding}

\subsection*{Conflicts of interests}
The authors have no relevant financial or non-financial interests to disclose.
\subsection*{Availability of data and material}
Not applicable.
\subsection*{Code availability}
Not applicable.

\end{document}